\setlist[enumerate,1]{label={\textup{(\arabic*)}}}
\setlist[enumerate,2]{label={\textup{(\roman*)}}}
\theoremstyle{definition}
\newtheorem{definition}{Definition}[section]
\theoremstyle{plain}
\newtheorem{theorem}[definition]{Theorem}
\newtheorem{proposition}[definition]{Proposition}
\newtheorem{corollary}[definition]{Corollary}
\newtheorem{lemma}[definition]{Lemma}
\theoremstyle{remark}
\newtheorem{remark}[definition]{Remark}
\newtheorem{example}[definition]{Example}
\newlength{\@thlabel@width}%
\newcommand{\thmenumhspace}{\settowidth{\@thlabel@width}{\upshape(i)}\sbox{\@labels}{\unhbox\@labels\hspace{\dimexpr-\leftmargin+\labelsep+\@thlabel@width-\itemindent}}}
\title{Generalized Jiang and Gottlieb groups}
\author{Marek Golasi\'nski}
\address{Faculty of Mathematics and Computer Science\\
University of Warmia and Mazury\\
S\l oneczna 54 Street\\
10-710 Olsztyn, Poland}
\email{marekg@matman.uwm.edu.pl}
\author[Thiago de Melo]{Thiago de Melo}
\address{Instituto de Geoci\^encias e Ci\^encias Exatas\\ UNESP--Univ Estadual Paulista\\ Av.~24A, 1515, Bela Vista. CEP 13.506--900. Rio Claro--SP, Brazil}
\email{tmelo@rc.unesp.br}
\thanks{Both authors are supported by CAPES--Ci\^encia sem Fronteiras. Processo: 88881.068125/2014-01.}
\subjclass[2010]{Primary:  55Q52, 57M10; secondary:  55Q05, 55Q15}
\keywords{Deck transformation, fibre-preserving map, Gottlieb group, Jiang group, universal covering map, Whitehead center group}
\begin{document}\baselineskip=1.5em

\begin{abstract}Given a map $f\colon X \to Y$, we extend a Gottlieb's result to the generalized Gottlieb group $G^f(Y,f(x_0))$ and show that the canonical isomorphism $\pi_1(Y,f(x_0))\xrightarrow{\approx}\mathcal{D}(Y)$ restricts to
an isomorphism $G^f(Y,f(x_0))\xrightarrow{\approx}\mathcal{D}^{\tilde{f}_0}(Y)$, where $\mathcal{D}^{\tilde{f}_0}(Y)$ is some subset of the group $\mathcal{D}(Y)$ of deck transformations of $Y$ for  a fixed lifting $\tilde{f}_0$ of $f$  with respect to universal coverings of $X$ and $Y$, respectively.
\end{abstract}

\maketitle

\section*{Introduction}

Throughout this paper, all spaces are path-connected with homotopy types of $CW$-complexes. 
We do not distinguish between a map and its homotopy class.

Let $X$ be a connected space, $x_0\in X$ a base-point and $\mathbb{S}^1$ the circle. The  \textit{Gottlieb group} $G(X,x_0)$ of $X$ defined in \cite{gottlieb1}  is the subgroup of the fundamental group $\pi_1(X,x_0)$ consisting of all elements which can be represented by a map $\alpha \colon \mathbb{S}^1\to X$ such that $\mathrm{id}_X \vee \alpha \colon X\vee \mathbb{S}^1\to X$ extends (up to homotopy) to a map $F \colon X\times \mathbb{S}^1\to X$.
Following \cite{gottlieb1}, we recall that $P(X,x_0)$ is the set of elements of $\pi_1(X,x_0)$ whose Whitehead products
with all elements of all homotopy groups $\pi_m(X,x_0)$  are zero for $m\geq 1$. It turns out that $P(X,x_0)$
forms a subgroup of $\pi_1(X,x_0)$ called the \textit{Whitehead center group} and, by \cite[Theorem~I.4]{gottlieb1}, it holds $G(X,x_0)\subseteq P(X,x_0)$.

Now, given a map $f \colon X\to Y$, in view of \cite{gottlieb} (see also \cite{kim}),
the  \textit{generalized Gottlieb group} $G^f(Y,f(x_0))$ is defined 
as the subgroup of  $\pi_1(Y,f(x_0))$ consisting of all elements which can be
represented by a map $\alpha \colon \mathbb{S}^1\to Y$ such that $f\vee \alpha \colon X\vee \mathbb{S}^1\to Y$ extends (up to homotopy) to a map $F \colon X\times \mathbb{S}^1\to Y$. 

The  \textit{generalized Whitehead center group}  $P^f(Y,f(x_0))$ as defined in \cite{kim} consists of all elements $\alpha\in\pi_1(Y,f(x_0))$ whose Whitehead products $[f\beta,\alpha]$ are zero for all  $\beta\in\pi_m(X,x_0)$ with $m\ge 1$. It turns out that $P^f(Y,f(x_0))$ forms a subgroup of $\pi_1(Y,f(x_0))$ and $G^f(Y,f(x_0))\subseteq P^f(Y,f(x_0))\subseteq \mathcal{Z}_{\pi_1(Y,f(x_0))}f_*(\pi_1(X,x_0))$, the centralizer of $f_*(\pi_1(X,x_0))$ in $\pi_1(Y,f(x_0))$. 

If $X=Y$ then the group $G^f(Y,f(x_0))$  is considered in \cite[Chapter~II, 3.5~Definition]{jiang}, denoted by $J(f,x_0)$ and called the \textit{Jiang subgroup} of the map $f\colon Y \to Y$. The role of the $J(f,x_0)$ played in that theory has been intensively studied in the book \cite{brown} as well. More precisely, it is observed that the group $J(f,x_0)$  acts on the right on the set of all fixed point classes of $f$, and any two  equivalent fixed point classes under this action have the same index. Further, Bo-Ju Jiang in \cite[Chapter~II, 3.1~Definition]{jiang} considered also the group $J(\tilde{f}_0)$ for a fixed lifting $\tilde{f}_0$ of $f$ to the universal covering of $Y$ and stressed its importance to the Nielsen--Wecken theory of fixed point classes. 

If $f=\mathrm{id}_X$ then, by \cite[Theorem~II.1]{gottlieb1}, the groups $J(f,x_0)$ and $J(\tilde{f}_0)$ are isomorphic and, according to \cite[Chapter~II, 3.6~Lemma]{jiang}, the groups $J(f,x_0)$ and $J(\tilde{f}_0)$ are isomorphic for any self-map $f\colon X\to X$ but no proof is given.

The aim of this paper is to follow the proof of \cite[Theorem~II.1]{gottlieb1} and give not only a proof of \cite[Chapter~II, 3.6~Lemma]{jiang} but also present a proof of its generalized version for any map $f\colon X \to Y$.

The paper is divided into two sections. Section~\ref{sec.1} follows some results from \cite{gottlieb1} and  deals with some properties of fibre-preserving maps and deck transformations used in the sequel. In particular, we show the functoriality of the fundamental group via deck transformations.  Section~\ref{sec.2} takes up the systematic study of the group $G^f(Y,f(x_0))$. If $X=Y$, $f=\mathrm{id}_X$ and $x_0\in X$ is a base-point then the group $G^f(X,x_0)=G(X,x_0)$ has been described in  \cite[Theorem~II.1]{gottlieb1} via the deck transformation group of $X$ and by \cite[Chapter~II, 3.6~Lemma]{jiang} the groups $G^f(X,x_0)=J(f,x_0)$ and $J(\tilde{f}_0)$ are isomorphic for a self-map $f\colon X\to X$.

Denote by $\mathcal{D}(Y)$ the group of all deck transformations of a space $Y$. Given a map $f\colon X \to Y$, write $\mathcal{L}^f(Y)$ for the set of all liftings  $\tilde{f}\colon \tilde{X}\to \tilde{Y}$ of $f$ with respect to universal coverings of $X$ and $Y$, respectively.  Now, for a fixed $\tilde{f}_0\in \mathcal{L}^f(Y)$, we denote by $\mathcal{D}^{\tilde{f}_0}(Y)$ the set (being a group) of all elements $h\in  \mathcal{D}(Y)$ such that $\tilde{f}_0\simeq_{\tilde{H}} h\tilde{f}_0$, where  $\tilde{H}\colon \tilde{X}\times I \to \tilde{Y}$ is a fibre-preserving homotopy with respect to the universal covering maps $p\colon \tilde{X}\to X$ and $q\colon \tilde{Y}\to Y$. Then, the main result, Theorem~\ref{qq} generalizes \cite[Theorem~II.1]{gottlieb1} and \cite[Chapter~II, 3.6~Lemma]{jiang} as follows:

\textit{Given $f\colon X \to Y$,  the canonical isomorphism $\pi_1(Y,f(x_0))\xrightarrow{\approx}\mathcal{D}(Y)$ restricts to an isomorphism $G^f(Y,f(x_0))\xrightarrow{\approx} \mathcal{D}^{\tilde{f}_0}(Y)$.}

\section{Preliminaries}\label{sec.1}

Let $p\colon X\to A$ and $q\colon Y\to B$ be maps. We say that $f\colon X\to Y$ is a \textit{fibre-preserving map} with respect to $p,q$ provided $p(x)=p(x')$ implies $qf(x)=qf(x')$ for any $x,x'\in X$. 

We say that  $H\colon X\times I \to Y$ is a \textit{fibre-preserving homotopy} with respect to  $p,q$ if $H$ is a fibre-preserving map with respect to $p\times \mathrm{id}_I\colon X\times I  \to A\times I$ and $q\colon Y \to B$.

It is clear that the commutativity of a diagram \[ \xymatrix{ X\ar[r]^{f}  \ar[d]_p & Y\ar[d]^q \\ A \ar[r]^g& B }\] guarantee that $f$ is a fibre-preserving map.

\begin{remark}
(1) Let $p,q,f$ be maps as above. If $p\colon X \to A$ is surjective then there exists a map $g\colon A\to B$ such that $qf=g p$. In addition, if $p$ is a quotient map, then $g$ is continuous.

(2) Given discrete groups $H$ and $K$, consider actions $H\times X \to X$ and $K\times Y \to Y$ and write $p\colon X \to X/H$ and $q\colon Y \to Y/K$ for the quotient maps. If $f\colon X \to Y$ is a $\varphi$-equivariant map for a homomorphism $\varphi\colon H \to K$ then $f$ is a fibre-preserving map with respect to $p$ and $q$.
\end{remark}

If $f\colon X \to Y$ is a fibre-preserving map and $g=\mathrm{id}_A$ then the map $f$ is a fibrewise map in the sense of \cite[Chapter~1]{james}. But, the reciprocal of that does not hold, as it is shown below:

\begin{example}
Let $p=q\colon \mathbb{S}^1\times I \to \mathbb{S}^1$ be the projection. Fix $1\neq \lambda\in \mathbb{S}^1$ and define $f\colon \mathbb{S}^1\times I\to \mathbb{S}^1\times I$ by $f(z,t)= (\lambda z,t)$ for $(z,t)\in \mathbb{S}^1\times I$. Then, $f$ is a fibre-preserving map but clearly $qf\neq p$.
\end{example}

Write $\mathcal{D}(X)$ for the group of all deck transformations of $X$ and  recall that there is an isomorphism $\mathcal{D}(X)\approx \pi_1(X,x_0)$. Next, given  a map $f\colon X\to Y$, consider the set $\mathcal{L}^f(Y)$ of all maps $\tilde{f}\colon \tilde{X}\to \tilde{Y}$ such that the diagram \[ \xymatrix{ \tilde{X}\ar[r]^{\tilde{f}}  \ar[d]_p & \tilde{Y}\ar[d]^q \\ X \ar[r]^f & Y }\] is commutative, where $p,q$ are universal covering maps. 

Fixing $\tilde{f}_0\in \mathcal{L}^f(Y)$, we follow \cite[Chapter~I, 1.2~Proposition]{jiang} to show:

\begin{proposition}\label{star}If $f\colon X\to Y$ then for any lifting $\tilde{f}\colon \tilde{X}\to \tilde{Y}$ of $f$ there is a unique $h\in \mathcal{D}(Y)$ such that $\tilde{f}=h\tilde{f}_0$. 
\end{proposition}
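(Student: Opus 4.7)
The plan is to reduce the statement to the standard fact that the universal covering $q\colon \tilde{Y}\to Y$ is a regular (normal) covering, so that $\mathcal{D}(Y)$ acts simply transitively on each fibre $q^{-1}(y)$, combined with the uniqueness of lifts from a connected space.

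First, I fix a base-point $\tilde{x}_0\in \tilde{X}$ and set $\tilde{y}_0=\tilde{f}_0(\tilde{x}_0)$ and $\tilde{y}_1=\tilde{f}(\tilde{x}_0)$. Since both $\tilde{f}_0$ and $\tilde{f}$ are liftings of $f$ through the universal coverings $p$ and $q$, the commutativity of the two squares yields $q(\tilde{y}_0)=fp(\tilde{x}_0)=q(\tilde{y}_1)$, so $\tilde{y}_0$ and $\tilde{y}_1$ lie in the same fibre of $q$. Because $\tilde{Y}$ is simply connected, the covering $q\colon\tilde{Y}\to Y$ is regular, hence there exists a unique deck transformation $h\in \mathcal{D}(Y)$ with $h(\tilde{y}_0)=\tilde{y}_1$.

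Next I verify that this $h$ does the job globally. The composition $h\tilde{f}_0\colon \tilde{X}\to \tilde{Y}$ satisfies $q\circ h\tilde{f}_0=q\tilde{f}_0=fp=q\tilde{f}$, so both $h\tilde{f}_0$ and $\tilde{f}$ are continuous lifts of the same map $fp\colon \tilde{X}\to Y$ along the covering $q$. By construction they agree at the point $\tilde{x}_0$, and $\tilde{X}$ is connected. The standard uniqueness-of-lifts theorem then forces $\tilde{f}=h\tilde{f}_0$ on all of $\tilde{X}$.

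Finally, uniqueness of $h$ follows immediately from the simple transitivity of $\mathcal{D}(Y)$ on the fibre through $\tilde{y}_0$: if $h'\in \mathcal{D}(Y)$ also satisfies $\tilde{f}=h'\tilde{f}_0$, then evaluation at $\tilde{x}_0$ gives $h'(\tilde{y}_0)=\tilde{y}_1=h(\tilde{y}_0)$, and a deck transformation of a connected covering space is determined by its value at one point. There is essentially no hard step here; the only thing to be careful with is invoking regularity of the universal cover to know that a deck transformation with the required value at a single point actually exists, which is exactly what universal-covering theory provides.
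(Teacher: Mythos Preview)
Your proof is correct and follows essentially the same route as the paper's own argument: pick a base-point, use the transitive action of $\mathcal{D}(Y)$ on the fibre to find $h$, invoke the unique lifting property to globalize, and then use that a deck transformation is determined by its value at one point to get uniqueness. The only cosmetic difference is that you spell out explicitly why such an $h$ exists (regularity of the universal cover) and why $\tilde{X}$ being connected is needed for the uniqueness-of-lifts step.
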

\begin{proof} First, fix $x_0\in X$ and $\tilde{x}_0\in p^{-1}(x_0)$, and
write $y_0=f(x_0)$, $\tilde{y}_0=\tilde{f}_0(\tilde{x}_0)$, and
$\tilde{y}=\tilde{f}(\tilde{x}_0)$. Obviously, $\tilde{y}_0,\tilde{y}\in
q^{-1}(y_0)$. Then, there exists a unique $h\in \mathcal{D}(Y)$ with
$h(\tilde{y}_0)=\tilde{y}$, that is, $h\tilde{f}_0(\tilde{x}_0)=
\tilde{f}(\tilde{x}_0)$. Since both $\tilde{f}$ and $h\tilde{f}_0$ are
lifts of $fp\colon \tilde{X}\to Y$, the unique lifting property
guarantees that $\tilde{f}=h\tilde{f}_0$. 
\par Now, suppose that $\tilde{f}=h\tilde{f}_0=h'\tilde{f}_0$ for some $h,h'\in \mathcal{D}(Y)$. Then, $h\tilde{f}_0(\tilde{x}_0)=h'\tilde{f}_0(\tilde{x}_0)$ implies $h(\tilde{y}_0)=h'(\tilde{y}_0)$. Consequently, $h=h'$ and the proof is complete.
\end{proof}

For a deck transformation $l\in \mathcal{D}(X)$, we notice that $\tilde{f}_0l$ is also a lifting of $f$. By Proposition~\ref{star}, there exists a unique $h_l\in \mathcal{D}(Y)$ such that $\tilde{f}_0l=h_l\tilde{f}_0$. 
Then, we define \[f_*\colon \mathcal{D}(X)\to \mathcal{D}(Y)\] by $f_*(l)=h_l$ for any $l\in \mathcal{D}(X)$. Obviously, the map $f_*$ is a homomorphism. Notice that the map $f_*$ has been already defined in \cite[Chapter~II, 1.1~Definition]{jiang} for any self-map $f\colon X \to X$.

Given $\tilde{f}_1,\tilde{f}_2\in \mathcal{L}^f(Y)$, we define $\tilde{f}_1\ast\tilde{f}_2=h_1h_2\tilde{f}_0$, where $\tilde{f}_1=h_1\tilde{f}_0$ and $\tilde{f}_2=h_2\tilde{f}_0$ for   $h_1,h_2\in \mathcal{D}(Y)$ as in Proposition~\ref{star}. This leads to a group structure on $\mathcal{L}^f(Y)$ with $\tilde{f}_0$ as the identity element. Notice that the groups $\mathcal{L}^f(Y)$ and $\mathcal{D}(Y)$ are isomorphic. In the sequel we identify those two groups, if necessary.

For a homotopy  $\tilde{H}\colon \tilde{X}\times I \to \tilde{Y}$, we write $\tilde{H}_t=\tilde{H}(-,t)$ with $t\in I$.

\begin{lemma}\label{Ht}Let $f\colon X \to Y$. A homotopy $\tilde{H}\colon \tilde{X}\times I \to \tilde{Y}$ with $\tilde{H}_0=\tilde{f}_0$ is a fibre-preserving homotopy if and only if for any $l\in \mathcal{D}(X)$ and $t\in I$ the following diagram 
\[ \xymatrix@C=1.5cm{ \tilde{X}\ar[r]^{\tilde{H}_t}  \ar[d]_-{l} & \tilde{Y}\ar[d]^-{f_*(l)} \\ \tilde{X} \ar[r]^{\tilde{H}_t} & \tilde{Y}  } \] commutes.
\end{lemma}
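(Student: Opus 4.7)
The plan is to prove the two implications separately, using in the forward direction the unique lifting property of the universal covering $q\colon \tilde{Y}\to Y$, and in the reverse direction the fact that $\mathcal{D}(X)$ acts transitively on the fibres of $p$.

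For the forward direction, suppose $\tilde{H}$ is fibre-preserving and fix $l\in\mathcal{D}(X)$. I would introduce the two maps
\[
\Phi,\Psi\colon \tilde{X}\times I\to\tilde{Y},\qquad
\Phi(\tilde{x},t)=\tilde{H}(l\tilde{x},t),\qquad
\Psi(\tilde{x},t)=f_*(l)\,\tilde{H}(\tilde{x},t),
\]
and show that both are lifts through $q$ of the same map $q\tilde{H}\colon \tilde{X}\times I\to Y$. Indeed, $q\circ f_*(l)=q$ because $f_*(l)\in\mathcal{D}(Y)$, which handles $\Psi$; and the fibre-preserving hypothesis applied to $(l\tilde{x},t)$ and $(\tilde{x},t)$ (which have the same image under $p\times\mathrm{id}_I$) gives $q\Phi=q\tilde{H}$. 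At $t=0$ the two lifts coincide:
\[
\Phi(\tilde{x},0)=\tilde{f}_0(l\tilde{x})=f_*(l)\tilde{f}_0(\tilde{x})=\Psi(\tilde{x},0),
\]
where the middle equality is the very definition of $f_*(l)$. Since $\tilde{X}\times I$ is connected and locally path-connected, two lifts that agree at one point must be equal, so $\Phi=\Psi$, which is precisely the asserted commutativity for every $t\in I$.

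For the reverse direction, suppose all the diagrams commute. Given $(\tilde{x},t),(\tilde{x}',t')\in\tilde{X}\times I$ with $(p\times\mathrm{id}_I)(\tilde{x},t)=(p\times\mathrm{id}_I)(\tilde{x}',t')$, I would observe $t=t'$ and $p(\tilde{x})=p(\tilde{x}')$; since $\mathcal{D}(X)$ acts transitively on each fibre of the universal covering $p$, there is $l\in\mathcal{D}(X)$ with $\tilde{x}'=l\tilde{x}$. The hypothesis then yields
\[
q\tilde{H}(\tilde{x}',t)=q\tilde{H}_t(l\tilde{x})=q\bigl(f_*(l)\tilde{H}_t(\tilde{x})\bigr)=q\tilde{H}_t(\tilde{x})=q\tilde{H}(\tilde{x},t),
\]
showing that $\tilde{H}$ is fibre-preserving with respect to $p\times\mathrm{id}_I$ and $q$.

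The only delicate step is the forward direction: one must identify the correct pair of lifts and verify they match at $t=0$ by invoking the definition of $f_*$. Once that identification is made, the proof reduces to a direct application of the unique lifting property, and the reverse implication is essentially formal.
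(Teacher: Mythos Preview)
Your proof is correct. The reverse direction is essentially identical to the paper's. In the forward direction, however, the paper argues differently: fixing $l\in\mathcal{D}(X)$, a point $\tilde{x}\in\tilde{X}$, and a value $t\in I$, it first picks the unique $h\in\mathcal{D}(Y)$ with $h\tilde{H}_t(\tilde{x})=\tilde{H}_t l(\tilde{x})$ and then pushes the equality down to $t=0$ by a ``greatest lower bound'' (open--closed) argument along $I$, where the definition of $f_*$ forces $h=f_*(l)$. Your approach replaces this pointwise continuity argument by one global application of the unique lifting property on the connected space $\tilde{X}\times I$, comparing the two lifts $\Phi=\tilde{H}\circ(l\times\mathrm{id}_I)$ and $\Psi=f_*(l)\circ\tilde{H}$ of $q\tilde{H}$. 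This is cleaner and avoids having to track an a priori $t$- and $\tilde{x}$-dependent $h$; the paper's route, on the other hand, makes explicit the local constancy in $t$ that underlies the uniqueness of lifts.
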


\begin{proof}Let $(\tilde{x},t),(\tilde{x}',t')\in \tilde{X}\times I$ with  $(p\times \mathrm{id}_I)(\tilde{x},t)=(p\times \mathrm{id}_I)(\tilde{x}',t')$. Then, $p(\tilde{x})=p(\tilde{x}')$ and $t=t'$. Next, consider $l\in \mathcal{D}(X)$ such that $l(\tilde{x})=\tilde{x}'$. Because $\tilde{H}_t=f_*(l)\tilde{H}_tl^{-1}$, we conclude that $q\tilde{H}_t(\tilde{x})=qf_*(l)\tilde{H}_tl^{-1}(\tilde{x})=qf_*(l)\tilde{H}_t(\tilde{x}')=q\tilde{H}_t(\tilde{x}')$. Hence $\tilde{H}$ is fibre-preserving.

Suppose $\tilde{H}$ is fibre-preserving, $l\in \mathcal{D}(X)$ and take $\tilde{x}\in \tilde{X}$, $t\in I$. Then, $\tilde{x}$ and $ l(\tilde{x})$ are in the same fibre of $p$. Since $\tilde{H}_t(\tilde{x})$ and $\tilde{H}_t(l(\tilde{x}))$ are in the same fibre of $q$, there exists a unique $h\in \mathcal{D}(Y)$ such that $h\tilde{H}_t(\tilde{x})=\tilde{H}_tl(\tilde{x})$. If $\varepsilon>0$ is sufficiently small, $h\tilde{H}_{t-\varepsilon}(\tilde{x})=\tilde{H}_{t-\varepsilon}l(\tilde{x})$. Thus, the greatest lower bound of the set of $t$'s such that $h\tilde{H}_t(\tilde{x})=\tilde{H}_tl(\tilde{x})$ must occur when $t=0$. Therefore, by continuity, $h\tilde{H}_0(\tilde{x})=\tilde{H}_0l(\tilde{x})$. But $\tilde{H}_0=\tilde{f}_0$, so we get $h\tilde{f}_0(\tilde{x})=\tilde{f}_0l(\tilde{x})=f_*(l)\tilde{f}_0(\tilde{x})$. This can occur only when $h=f_*(l)$. Consequently, $\tilde{H}_tl=f_*(l)H_t$ and the proof is complete.
\end{proof}

Now, fix $\tilde{f}_0\in \mathcal{L}^{f}(Y)$ and consider the subset $\mathcal{D}^{\tilde{f}_0}(Y)$ of elements $h\in  \mathcal{D}(Y)$  such that $\tilde{f}_0\simeq_{\tilde{H}} h\tilde{f}_0$, where $\tilde{H}\colon \tilde{X}\times I \to \tilde{Y}$ is a fibre-preserving homotopy with respect to the universal covering maps $p,q$. 
Equivalently, in view of Lemma~\ref{Ht}, the set $\mathcal{D}^{\tilde{f}_0}(Y)$ coincides with the set of all elements $h\in  \mathcal{D}(Y)$ for which there is a homotopy $f\simeq_H f$ which lifts to a homotopy $\tilde{H}$ with $\tilde{f}_0\simeq_{\tilde{H}}h\tilde{f}_0$.

Next, write $\mathcal{Z}_{\mathcal{D}(Y)} f_*(\mathcal{D}(X))$ for the centralizer of $ f_*(\mathcal{D}(X))$ in $\mathcal{D}(Y)$. Then, the result below generalizes \cite[Chapter~II, 3.2~Proposition, 3.3~Lemma]{jiang} as follows:

\begin{proposition}\label{prop.center}The subset $\mathcal{D}^{\tilde{f}_0}(Y)$  is contained in $\mathcal{Z}_{\mathcal{D}(Y)} f_*(\mathcal{D}(X))$ and is a subgroup of $\mathcal{D}(Y)$.
\end{proposition}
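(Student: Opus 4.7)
The plan is to handle the two assertions in Proposition~\ref{prop.center} separately, leaning heavily on Lemma~\ref{Ht} for the centralizer claim and on simple concatenation/reversal of homotopies for the subgroup claim.

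For the centralizer containment, I take $h\in \mathcal{D}^{\tilde{f}_0}(Y)$ with a fibre-preserving homotopy $\tilde{H}$ satisfying $\tilde{H}_0=\tilde{f}_0$ and $\tilde{H}_1=h\tilde{f}_0$. Applying Lemma~\ref{Ht} at $t=1$ to an arbitrary $l\in \mathcal{D}(X)$ gives
\[
h\tilde{f}_0\, l \;=\; \tilde{H}_1\, l \;=\; f_*(l)\,\tilde{H}_1 \;=\; f_*(l)\,h\tilde{f}_0.
\]
Combining this with the defining identity $\tilde{f}_0 l = f_*(l)\tilde{f}_0$ yields
\[
h\,f_*(l)\,\tilde{f}_0 \;=\; f_*(l)\,h\,\tilde{f}_0,
\]
so the two deck transformations $h f_*(l)$ and $f_*(l) h$ of $\tilde{Y}$ agree at the point $\tilde{y}_0=\tilde{f}_0(\tilde{x}_0)$. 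Since a deck transformation is determined by its value at a single point (Proposition~\ref{star} style uniqueness), this forces $h f_*(l)=f_*(l) h$, proving $h\in \mathcal{Z}_{\mathcal{D}(Y)}f_*(\mathcal{D}(X))$.

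For the subgroup property I verify the three axioms. The identity $\mathrm{id}_{\tilde{Y}}$ lies in $\mathcal{D}^{\tilde{f}_0}(Y)$ via the constant homotopy $\tilde{H}(\tilde{x},t)=\tilde{f}_0(\tilde{x})$, which is trivially fibre-preserving. For closure under products, given $h_1,h_2\in \mathcal{D}^{\tilde{f}_0}(Y)$ with fibre-preserving homotopies $\tilde{H}^1\colon \tilde{f}_0\simeq h_1\tilde{f}_0$ and $\tilde{H}^2\colon \tilde{f}_0\simeq h_2\tilde{f}_0$, I concatenate $\tilde{H}^1$ with $h_1\cdot\tilde{H}^2$ to obtain a homotopy from $\tilde{f}_0$ to $h_1h_2\tilde{f}_0$; composing a fibre-preserving homotopy with the deck transformation $h_1$ preserves the fibre-preserving property since $h_1$ permutes fibres of $q$, so the concatenated homotopy is still fibre-preserving. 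For closure under inverses, given $\tilde{H}\colon \tilde{f}_0\simeq h\tilde{f}_0$ fibre-preserving, the homotopy $h^{-1}\cdot\tilde{H}$ runs from $h^{-1}\tilde{f}_0$ to $\tilde{f}_0$ and is again fibre-preserving by the same reasoning; reversing it produces a fibre-preserving homotopy from $\tilde{f}_0$ to $h^{-1}\tilde{f}_0$.

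The only subtle step is the first one: the passage from $h\tilde{f}_0 l = f_*(l)h\tilde{f}_0$ to $hf_*(l)=f_*(l)h$ requires both the $\tilde{f}_0$-defining relation for $f_*$ and the uniqueness principle for deck transformations. Everything else is routine manipulation of homotopies, so this is the main conceptual point of the proof.
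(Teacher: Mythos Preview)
Your proof is correct and follows essentially the same approach as the paper: both use Lemma~\ref{Ht} at $t=1$ together with the defining relation $\tilde{f}_0 l=f_*(l)\tilde{f}_0$ to obtain the centralizer containment, and both build the required fibre-preserving homotopies for the subgroup claim by composing with deck transformations and concatenating/reversing. The only cosmetic difference is that the paper uses the one-step subgroup criterion (showing $hh'^{-1}\in\mathcal{D}^{\tilde{f}_0}(Y)$ directly) whereas you verify identity, products, and inverses separately; you are also a bit more explicit than the paper about why $hf_*(l)\tilde{f}_0=f_*(l)h\tilde{f}_0$ forces $hf_*(l)=f_*(l)h$.
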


\begin{proof}
Let $h\in \mathcal{D}^{\tilde{f}_0}(Y)$. Then, there is a fibre-preserving homotopy  $\tilde{H}\colon \tilde{X}\times I \to \tilde{Y}$ with $\tilde{H}_0=\tilde{f}_0$ and  $\tilde{H}_1=h\tilde{f}_0$. But, by Lemma~\ref{Ht}, it holds $f_*(l)\tilde{H}_t=\tilde{H}_tl$ for any $l\in \mathcal{D}(X)$ and $t\in I$. Hence,  for $t=0,1$ we get  $f_*(l)\tilde{f}_0=\tilde{f}_0l$ and $f_*(l)h\tilde{f}_0=h\tilde{f}_0l=hf_*(l)\tilde{f}_0$. Consequently,  $f_*(l)h=hf_*(l)$ and we get that $h\in \mathcal{Z}_{\mathcal{D}(Y)} f_*(\mathcal{D}(X))$.

To show the second part, take $h,h'\in \mathcal{D}^{\tilde{f}_0}(Y)$. Then, there are fibre-preserving homotopies $\tilde{H},\tilde{H}'\colon \tilde{X}\times I \to \tilde{Y}$ with $\tilde{H}_0=\tilde{H}_0'=\tilde{f}_0$, $\tilde{H}_1=h\tilde{f}_0$ and $\tilde{H}'_1=h'\tilde{f}_0$. Next,  consider the map $\tilde{H}''\colon \tilde{X}\times I \to \tilde{Y}$ given by $\tilde{H}''(\tilde{x},t)=hh'^{-1}\tilde{H}'(\tilde{x},1-t)$ for $(\tilde{x},t)\in \tilde{X}\times I$ and notice that $\tilde{H}''$ is a fibre-preserving homotopy with $\tilde{H}''_0=h\tilde{f}_0$ and  $\tilde{H}''_1=hh'^{-1}\tilde{f}_0$. Finally, the concatenation $\tilde{H}\bullet \tilde{H}'' \colon \tilde{X}\times I \to \tilde{Y}$ is a fibre-preserving homotopy with $(\tilde{H}\bullet \tilde{H}'')_0=\tilde{f}_0$ and  $(\tilde{H}\bullet \tilde{H}'')_1=hh'^{-1}\tilde{f}_0$. Consequently, $hh'^{-1}\in \mathcal{D}^{\tilde{f}_0}(Y)$ and the proof is complete.
\end{proof}

Notice that if $f\colon X \to X$ is a self-map then the group $\mathcal{D}^{\tilde{f}_0}(X)$ coincides with the group $J(\tilde{f}_0)$ defined in \cite[Chapter~II, 3.1~Definition]{jiang}.

\section{Main result}\label{sec.2}

Given spaces $X$ and $Y$, write $Y^X$ for the space of continuous maps from $X$ into $Y$ with the compact-open topology. Next, consider the evaluation map $\mathrm{ev} \colon Y^X\to Y$, i.e., $\mathrm{ev}(f)=f(x_0)$ for $f\in  Y^X$ and the base-point $x_0\in X$. 
Then, it holds \[G^f(Y,f(x_0))=\operatorname{Im}\bigl(\mathrm{ev}_\ast \colon \pi_1(Y^X,f)\to \pi_1(Y,f(x_0))\bigr).\] 
Certainly, $G^f(X,f(x_0))$ coincides with the group $J(f,x_0)$ defined in \cite[Chapter~II, 3.5~Definition]{jiang} for a self-map $f\colon X\to X$.
\par Now, we follow \textit{mutatis mutandis} the result \cite[Theorem~II.1]{gottlieb1} to generalize \cite[Chapter~II, 3.6~Lemma]{jiang} as follows: 

\begin{theorem}\label{qq}Given $f\colon X \to Y$, the canonical isomorphism $\pi_1(Y,f(x_0))\xrightarrow{\approx} \mathcal{D}(Y)$ restricts to an isomorphism $G^f(Y,f(x_0))\xrightarrow{\approx} \mathcal{D}^{\tilde{f}_0}(Y)$.
\end{theorem}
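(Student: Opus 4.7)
The plan is to set up a correspondence between the defining data for $G^f(Y,f(x_0))$ (extensions $F\colon X\times \mathbb{S}^1\to Y$ of $f\vee \alpha$) and the defining data for $\mathcal{D}^{\tilde{f}_0}(Y)$ (fibre-preserving homotopies $\tilde{H}\colon \tilde{X}\times I\to \tilde{Y}$ from $\tilde{f}_0$ to $h\tilde{f}_0$), and then check that both sides identify the same element under the canonical isomorphism $\pi_1(Y,f(x_0))\xrightarrow{\approx}\mathcal{D}(Y)$, $\alpha\mapsto h_\alpha$, where $h_\alpha$ is characterised by $h_\alpha(\tilde{y}_0)=\tilde{\alpha}(1)$ for $\tilde{\alpha}$ the lift of $\alpha$ starting at $\tilde{y}_0=\tilde{f}_0(\tilde{x}_0)$.

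For the forward inclusion, I would start with $\alpha\in G^f(Y,f(x_0))$ and an extension $F\colon X\times\mathbb{S}^1\to Y$ of $f\vee\alpha$. Viewing $\mathbb{S}^1$ as $I/\partial I$, $F$ becomes a homotopy $H\colon X\times I\to Y$ with $H_0=H_1=f$ and $H(x_0,t)=\alpha(t)$. Lift it to $\tilde{H}\colon \tilde{X}\times I\to \tilde{Y}$ with $\tilde{H}_0=\tilde{f}_0$. Because $q\tilde{H}=H\circ(p\times\mathrm{id}_I)$ factors through $p\times\mathrm{id}_I$, the homotopy $\tilde{H}$ is automatically fibre-preserving. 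Since $q\tilde{H}_1=fp$, Proposition~\ref{star} gives a unique $h\in\mathcal{D}(Y)$ with $\tilde{H}_1=h\tilde{f}_0$. The path $t\mapsto\tilde{H}(\tilde{x}_0,t)$ is the lift of $\alpha$ starting at $\tilde{y}_0$, so its endpoint $h(\tilde{y}_0)=\tilde{H}_1(\tilde{x}_0)$ equals $\tilde{\alpha}(1)=h_\alpha(\tilde{y}_0)$. Hence $h=h_\alpha$, and $h_\alpha\in \mathcal{D}^{\tilde{f}_0}(Y)$.

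For the reverse inclusion, start with $h\in \mathcal{D}^{\tilde{f}_0}(Y)$ and a fibre-preserving homotopy $\tilde{H}\colon \tilde{X}\times I\to \tilde{Y}$ with $\tilde{H}_0=\tilde{f}_0$, $\tilde{H}_1=h\tilde{f}_0$. The fibre-preserving property allows $q\tilde{H}$ to be pushed down to $H\colon X\times I\to Y$ with $H\circ(p\times\mathrm{id}_I)=q\tilde{H}$; using that $q$ is a quotient map makes $H$ continuous. Since $q\tilde{H}_0=fp=q\tilde{H}_1$, we get $H_0=H_1=f$, so $H$ collapses on $X\times\partial I$ to $f$ and defines a map $F\colon X\times\mathbb{S}^1\to Y$. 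Setting $\alpha(t)=H(x_0,t)$ gives a loop in $Y$ at $f(x_0)$ with $F|_{X\vee\mathbb{S}^1}=f\vee\alpha$, so $\alpha\in G^f(Y,f(x_0))$. The same endpoint-of-lift computation as in the forward direction shows $h_\alpha=h$.

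The two directions are essentially inverse to each other and together establish both that the canonical isomorphism sends $G^f(Y,f(x_0))$ into $\mathcal{D}^{\tilde{f}_0}(Y)$ and that every element of $\mathcal{D}^{\tilde{f}_0}(Y)$ is hit. The only delicate point I anticipate is the descent of $q\tilde{H}$ to a continuous $H$ on $X\times I$; this is where the remark on quotient maps following the definition of fibre-preserving map is invoked, since $p\times\mathrm{id}_I$ is a quotient map. Everything else is diagram chasing and the uniqueness of path lifting, and no new homotopical input beyond Proposition~\ref{star} and Lemma~\ref{Ht} is needed.
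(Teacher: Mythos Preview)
Your proposal is correct and follows essentially the same route as the paper: lift $H$ via the homotopy lifting property to obtain a fibre-preserving $\tilde{H}$ with $\tilde{H}_1=h\tilde{f}_0$ in one direction, and descend a fibre-preserving $\tilde{H}$ through the quotient $p\times\mathrm{id}_I$ in the other, with the identification $h=h_\alpha$ checked by tracking the lifted path at $\tilde{x}_0$. You are slightly more explicit than the paper about the quotient-map descent and about verifying $h=h_\alpha$ in both directions, but the structure and ingredients are the same (and Lemma~\ref{Ht} is not actually needed, only Proposition~\ref{star} and the homotopy lifting property).
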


\begin{proof}Let $\alpha\in G^f(Y,f(x_0))$ and $h\in \mathcal{D}(Y)$ be the corresponding deck transformation. Then, there is a homotopy $H\colon X\times I \to Y$ such that $H_0=H_1=f$ and $H(x_0,-)=\alpha$, where $x_0\in X$ is a base-point. Next, consider the commutative diagram
\[ \xymatrix@C=1.5cm{
\tilde{X} \ar[d]_{i_0}\ar[rr]^{\tilde{f}_0} && \tilde{Y} \ar[d]^q \\
\tilde{X}\times I \ar[r]^{p\times \mathrm{id}_I} & X\times I \ar[r]^H & Y\rlap{.}
}  \] Then, by the lifting homotopy property there is a map $\tilde{H}\colon \tilde{X}\times I \to \tilde{Y}$ such that $\tilde{H}i_0=\tilde{H}_0=\tilde{f}_0$ and $q\tilde{H}=H(p\times \mathrm{id}_I)$. This implies that $\tilde{H}$ is a fibre-preserving homotopy. Further, because $H_0=H_1=f$, we also derive that $\tilde{H}_0,\tilde{H}_1\in \mathcal{L}^f(Y)$.

Now, since the path $\tilde{\tau}\colon I\to \tilde{Y}$ defined by $\tilde{\tau}=\tilde{H}(\tilde{x}_0,-)$ runs from $\tilde{f}_0(\tilde{x}_0)$ to $\tilde{H}_1(\tilde{x}_0)$, we derive that $\alpha=q\tilde{\tau}$. Consequently, by means of Proposition~\ref{star} we get $\tilde{H}_1=h\tilde{f}_0$ and so $\tilde{H}$ is the required fibre-preserving homotopy with $\tilde{f}_0\simeq_{\tilde{H}} h\tilde{f}_0$. 

Conversely, given $h\in \mathcal{D}^{\tilde{f}_0}(Y)$, there is a fibre-preserving homotopy $\tilde{H}\colon \tilde{X}\times I \to \tilde{Y}$ with $\tilde{f}_0\simeq_{\tilde{H}} h\tilde{f}_0$. This implies a homotopy $H\colon X\times I \to Y$ such that $H_0=H_1=f$ and $q\tilde{H}=H(p\times \mathrm{id}_I)$. Then, the path $\tau\colon I\to Y$ given by $\tau=H(x_0,-)$  leads to the required loop in $G^f(Y,f(x_0))$.
\end{proof}
Notice that by Theorem~\ref{qq} the group $\mathcal{D}^{\tilde{f}_0}(Y)$ is independent of the lifting $\tilde{f}_0\in\mathcal{L}^f(Y)$. Further, the advantage of $G^f(Y,f(x_0))$ over $\mathcal{D}^{\tilde{f}_0}(Y)$ is that it does not involve the covering spaces $\tilde{X}$ and $\tilde{Y}$ explicitly, hence it is easier to handle.

Next, Lemma~\ref{Ht} and Theorem~\ref{qq} yield:
\begin{corollary}\label{cor2}If $f\colon X \to Y$ then $G^f(Y,f(x_0))$ is isomorphic to the subgroup of $\mathcal{D}(Y)$ given by those deck transformations $h$ for which there are homotopies $\tilde{H}\colon \tilde{X}\times I \to \tilde{Y}$ such that $\tilde{f}_0\simeq_{\tilde{H}} h\tilde{f}_0$  and the diagrams 
\[ \xymatrix@C=1.5cm{ \tilde{X}\ar[r]^{\tilde{H}_t}  \ar[d]_-{l} & \tilde{Y}\ar[d]^-{f_*(l)} \\ \tilde{X} \ar[r]^{\tilde{H}_t} & \tilde{Y}  } \] commute for any $l\in \mathcal{D}(X)$ and $t\in I$. Equivalently, the homotopies $\tilde{H}\colon \tilde{X}\times I \to \tilde{Y}$ are $f_*$-equivariant.
\end{corollary}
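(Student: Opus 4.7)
The plan is to prove the two directions of the restriction separately, in both cases going through the homotopy lifting property of the universal covering $q\colon\tilde{Y}\to Y$ and Proposition~\ref{star} to identify lifts with deck transformations. Throughout, fix $\tilde{x}_0\in p^{-1}(x_0)$ and write $\tilde{y}_0=\tilde{f}_0(\tilde{x}_0)$ for the canonical basepoint of $\tilde{Y}$.

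For the forward direction I would take $\alpha\in G^f(Y,f(x_0))$ with corresponding deck transformation $h\in\mathcal{D}(Y)$ under the canonical isomorphism (so $h(\tilde{y}_0)$ is the endpoint of the $q$-lift of $\alpha$ starting at $\tilde{y}_0$). By definition of $G^f$ there exists $H\colon X\times I\to Y$ with $H_0=H_1=f$ and $H(x_0,-)=\alpha$. The homotopy lifting property applied to $H\circ(p\times\mathrm{id}_I)$ with initial lift $\tilde{f}_0$ produces $\tilde{H}\colon\tilde{X}\times I\to\tilde{Y}$ satisfying $\tilde{H}_0=\tilde{f}_0$ and $q\tilde{H}=H(p\times\mathrm{id}_I)$. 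The latter identity makes $\tilde{H}$ fibre-preserving essentially tautologically: whenever $p(\tilde{x})=p(\tilde{x}')$, both $q\tilde{H}_t(\tilde{x})$ and $q\tilde{H}_t(\tilde{x}')$ equal $H(p(\tilde{x}),t)$. Since $H_1=f$, the map $\tilde{H}_1$ is some lift of $f$, so by Proposition~\ref{star} there is a unique $h'\in\mathcal{D}(Y)$ with $\tilde{H}_1=h'\tilde{f}_0$. To identify $h'=h$, observe that the path $t\mapsto\tilde{H}(\tilde{x}_0,t)$ is the unique lift of $\alpha$ starting at $\tilde{y}_0$, and its endpoint is $h'\tilde{f}_0(\tilde{x}_0)=h'(\tilde{y}_0)$; by the very definition of the canonical isomorphism this forces $h'=h$, and so $h\in\mathcal{D}^{\tilde{f}_0}(Y)$.

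For the converse I would start from $h\in\mathcal{D}^{\tilde{f}_0}(Y)$ and a fibre-preserving homotopy $\tilde{H}\colon\tilde{X}\times I\to\tilde{Y}$ from $\tilde{f}_0$ to $h\tilde{f}_0$. The fibre-preserving condition says exactly that $q\tilde{H}(\tilde{x},t)$ depends only on $(p(\tilde{x}),t)$, so it factors through $p\times\mathrm{id}_I$ to give a well-defined $H\colon X\times I\to Y$; if one worries about continuity, it follows because $p\times\mathrm{id}_I$ is a quotient map. The endpoints satisfy $H_0=f=H_1$ (using $qh=q$), so $\alpha:=H(x_0,-)$ is a loop at $f(x_0)$. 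By construction $\alpha$ is covered by $t\mapsto\tilde{H}(\tilde{x}_0,t)$, whose endpoint is $h(\tilde{y}_0)$; therefore $\alpha$ corresponds to $h$ under the canonical isomorphism. Moreover, $H$ itself witnesses $\alpha\in G^f(Y,f(x_0))$.

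The main obstacle I expect is purely bookkeeping rather than conceptual: keeping straight that the $h$ appearing on the topological side (endpoint of the lift of $\alpha$) is literally the same $h$ that appears on the covering side (the one produced by Proposition~\ref{star} applied to $\tilde{H}_1$). Once this identification is pinned down, both directions are just the homotopy lifting property packaged via the fibre-preserving condition of Lemma~\ref{Ht}, and there is no need to verify the homomorphism property separately because the restriction of an isomorphism to matching subsets is automatically an isomorphism.
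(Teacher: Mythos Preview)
Your argument is correct, but it is essentially a reproof of Theorem~\ref{qq} rather than a proof of the corollary. What you establish, in both directions, is precisely the isomorphism $G^f(Y,f(x_0))\xrightarrow{\approx}\mathcal{D}^{\tilde{f}_0}(Y)$ with $\mathcal{D}^{\tilde{f}_0}(Y)$ described via \emph{fibre-preserving} homotopies; indeed your forward and converse paragraphs mirror the paper's proof of Theorem~\ref{qq} almost line for line (homotopy lifting to produce $\tilde{H}$, Proposition~\ref{star} to identify $\tilde{H}_1$, tracking the lifted path at $\tilde{x}_0$ to match $h'$ with $h$).

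The paper, by contrast, gives no separate argument for the corollary: it simply records that Lemma~\ref{Ht} and Theorem~\ref{qq} together yield the statement. The only content of Corollary~\ref{cor2} beyond Theorem~\ref{qq} is the reformulation of ``fibre-preserving'' as ``the diagrams $\tilde{H}_t\,l=f_*(l)\,\tilde{H}_t$ commute for all $l\in\mathcal{D}(X)$ and $t\in I$'', i.e., $f_*$-equivariance --- and that equivalence is exactly Lemma~\ref{Ht}. You allude to this at the very end (``packaged via the fibre-preserving condition of Lemma~\ref{Ht}''), but you never state the translation explicitly, so the one step that is genuinely specific to the corollary is the one you leave implicit. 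A cleaner write-up would be a single sentence: by Theorem~\ref{qq} the canonical isomorphism restricts to $G^f(Y,f(x_0))\xrightarrow{\approx}\mathcal{D}^{\tilde{f}_0}(Y)$, and by Lemma~\ref{Ht} the defining fibre-preserving homotopies are exactly the $f_*$-equivariant ones.
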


Let $\mathcal{H}^{\tilde{f}_0}(Y)$ be the subset of all $h\in\mathcal{Z}_{\mathcal{D}(Y)}f_*(\mathcal{D}(X))$ such that $\tilde{f}_0\simeq h\tilde{f}_0$. By similar arguments as in the proof of Proposition~\ref{prop.center}, it is easy to verify that  $\mathcal{H}^{\tilde{f}_0}(Y)$ is a subgroup of $\mathcal{Z}_{\mathcal{D}(Y)}f_*(\mathcal{D}(X))$. 

Now, we process as in the proof of \cite[Theorem~II.6]{gottlieb1} to show:

\begin{proposition}\label{pp}Given $f\colon X \to Y$, there are inclusions \[ G^f(Y,f(x_0))\subseteq \mathcal{H}^{\tilde{f}_0}(Y)\subseteq P^f(Y,f(x_0)). \]
\end{proposition}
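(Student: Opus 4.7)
The plan is to prove the two inclusions separately: the first falls out immediately from results already established, while the second requires an explicit construction extending $(f\beta)\vee\alpha$ over $\mathbb{S}^m\times \mathbb{S}^1$.

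For $G^f(Y,f(x_0))\subseteq \mathcal{H}^{\tilde{f}_0}(Y)$, I would invoke Theorem~\ref{qq} to identify $G^f(Y,f(x_0))$ with $\mathcal{D}^{\tilde{f}_0}(Y)$. By Proposition~\ref{prop.center} each $h\in \mathcal{D}^{\tilde{f}_0}(Y)$ already lies in $\mathcal{Z}_{\mathcal{D}(Y)} f_*(\mathcal{D}(X))$, and a fibre-preserving homotopy $\tilde{f}_0\simeq_{\tilde{H}} h\tilde{f}_0$ is a fortiori a homotopy; so $h\in \mathcal{H}^{\tilde{f}_0}(Y)$.

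For $\mathcal{H}^{\tilde{f}_0}(Y)\subseteq P^f(Y,f(x_0))$, I would take $h\in \mathcal{H}^{\tilde{f}_0}(Y)$ corresponding to $\alpha\in\pi_1(Y,f(x_0))$ under the canonical isomorphism (which, being compatible with $f_*$ as established in Section~\ref{sec.1}, translates the centralizer condition in $\mathcal{D}(Y)$ to the one in $\pi_1(Y,f(x_0))$). Given $\beta\in\pi_m(X,x_0)$, the task is to verify $[f\beta,\alpha]=0$. For $m=1$ the Whitehead product is the commutator, so its vanishing is exactly the centralizer condition already encoded in $\mathcal{H}^{\tilde{f}_0}(Y)$. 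For $m\ge 2$, simple-connectedness of $\mathbb{S}^m$ allows a lift $\tilde\beta\colon \mathbb{S}^m \to \tilde X$ with $\tilde\beta(s_0)=\tilde x_0$.

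To finish, I would pick any homotopy $\tilde H\colon \tilde X\times I\to \tilde Y$ with $\tilde H_0=\tilde f_0$ and $\tilde H_1=h\tilde f_0$, and define $F=q\tilde H (\tilde\beta\times \mathrm{id}_I)\colon \mathbb{S}^m\times I\to Y$. Since $qh=q$, one reads off $F_0=f\beta=F_1$, while the loop $F(s_0,-)=q\tilde H(\tilde x_0,-)$ lifts to a path running from $\tilde f_0(\tilde x_0)$ to $h\tilde f_0(\tilde x_0)$, so by Proposition~\ref{star} it represents $\alpha$. Collapsing $t=0$ to $t=1$ then yields the desired extension $\mathbb{S}^m\times \mathbb{S}^1\to Y$ of $(f\beta)\vee\alpha$, forcing $[f\beta,\alpha]=0$. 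The delicate point is ensuring that $F(s_0,-)$ represents precisely $\alpha$ and not some conjugate; this is the content of Proposition~\ref{star} applied to the chosen base-point data, and is the only place where the ordinary (not fibre-preserving) homotopy $\tilde H$ is invoked.
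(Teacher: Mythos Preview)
Your proof is correct and follows essentially the same route as the paper: the first inclusion via Proposition~\ref{prop.center} and Theorem~\ref{qq}, and the second by lifting $\beta$ to $\tilde X$, forming $q\tilde H(\tilde\beta\times\mathrm{id}_I)$, and handling $m=1$ separately via the centralizer condition. The only quibble is that your appeal to Proposition~\ref{star} for the claim that $q\tilde H(\tilde x_0,-)$ represents $\alpha$ is misplaced---that proposition concerns liftings of $f$, whereas what you actually need (and clearly have in mind) is just the definition of the canonical isomorphism $\pi_1(Y,f(x_0))\approx\mathcal{D}(Y)$.
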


\begin{proof} Certainly, the inclusion  $G^f(Y,f(x_0))\subseteq \mathcal{H}^{\tilde{f}_0}(Y)$ is a direct consequence of Proposition~\ref{prop.center} and Theorem~\ref{qq}.

Now, let $h\in \mathcal{H}^{\tilde{f}_0}(Y)$  and $\tilde{H} \colon \tilde{X}\times I\to \tilde{Y}$ be a  homotopy with   $\tilde{f}_0\simeq_{\tilde{H}} h\tilde{f}_0$. Next, consider the path $\tilde{\phi} \colon I\to \tilde{Y}$ defined by $\tilde{\phi}=\tilde{H}(\tilde{x}_0,-)$, where $p(\tilde{x}_0)=x_0$. Then, the loop $\phi=q\tilde{\phi}$ corresponds to $h$.

Notice that $\phi$ acts trivially on $f_\ast(\pi_m(X,x_0))$ for $m>1$ if and only if there is a map $F \colon \mathbb{S}^m\times \mathbb{S}^1\to Y$ such that the diagram
\[ \xymatrix@C=2cm{\mathbb{S}^m\vee \mathbb{S}^1\ar@{^{(}->}[d] \ar[r]^-{f\alpha\vee \phi} & Y \\ 
\mathbb{S}^m\times \mathbb{S}^1 \ar@{-->}[ru]_F &} \]
commutes (up to homotopy) for any $\alpha\in \pi_m(X,x_0)$.\

Given $\alpha\in \pi_m(X,x_0)$ with $m>1$, there exists $\tilde{\alpha}\in\pi_m(\tilde{X},\tilde{x}_0)$ such that $p\tilde{\alpha}=\alpha$. Thus, we define a map \[F' \colon \mathbb{S}^m\times I\xrightarrow{\tilde{\alpha}\times \mathrm{id}_I}\tilde{X}\times I\xrightarrow{\tilde{H}}\tilde{Y}\xrightarrow{q}Y.\]
Because $F'(s,0)=q\tilde{H}(\tilde{\alpha}(s),0)=q\tilde{f}_0(\tilde{\alpha}(s))=fp\tilde{\alpha}(s)$ and
$F'(s,1)=q\tilde{H}(\tilde{\alpha}(s),1)=qh\tilde{f}_0(\tilde{\alpha}(s))=q\tilde{f}_0(\tilde{\alpha}(s))=fp\tilde{\alpha}(s)$ for $s\in \mathbb{S}^m$, the map $F'$
implies the required map $F\colon  \mathbb{S}^m\times \mathbb{S}^1\to Y$. 

Since  $\mathcal{H}^{\tilde{f}_0}(Y)\subseteq \mathcal{Z}_{\mathcal{D}(Y)}f_*(\mathcal{D}(X))$, we derive that $\phi$ acts trivially also on $f_\ast(\mathcal{D}(X))$. This gives the inclusion $\mathcal{H}^{\tilde{f}_0}(Y)\subseteq P^f(Y,f(x_0))$ and the proof is complete.
\end{proof}

Let $H$ be a finite group acting freely on a $(2n+1)$-homotopy sphere $\Sigma(2n+1)$.
If $\Sigma(2n+1)/H$ is the corresponding space form then, following \cite[Chapter~VII, Proposition~10.2]{brown}, the action of $H=\mathcal{D}(\Sigma(2n+1)/H)$  on  $\pi_m(\Sigma(2n+1)/H,y_0)$ is trivial for $m>1$. In particular, $H$ acts trivially on $\pi_{2n+1}(\Sigma(2n+1)/H,y_0)\approx \pi_{2n+1}(\Sigma(2n+1),\tilde{y}_0)$. This implies that for any $h\in H$, the induced homeomorphism $h_*\colon \Sigma(2n+1)\to \Sigma(2n+1)$ is homotopic to $\mathrm{id}_{\Sigma(2n+1)}$. Consequently, if $f\colon X\to \Sigma(2n+1)/H$ is a map then $\mathcal{H}^{\tilde{f}_0}(\Sigma(2n+1)/H)=\mathcal{Z}_{H}f_*(\mathcal{D}(X))$. Because $P^f(\Sigma(2n+1)/H,f(x_0))\subseteq \mathcal{Z}_{H}f_*(\mathcal{D}(X))$, Proposition~\ref{pp} yields \[\mathcal{H}^{\tilde{f}_0}(\Sigma(2n+1)/H)=P^f(\Sigma(2n+1)/H,f(x_0))=\mathcal{Z}_{H}f_*(\mathcal{D}(X)). \]

Further, the result \cite[Theorem~1.17]{gol}, Theorem~\ref{qq} and Proposition~\ref{pp} lead to:

\begin{corollary}
If $f\colon X\to\Sigma(2n+1)/H $ is a map as in \cite[Theorem~1.14]{gol} then $\mathcal{D}^{\tilde{f}_0}(\Sigma(2n+1)/H)=\mathcal{H}^{\tilde{f}_0}(\Sigma(2n+1)/H)=\mathcal{Z}_Hf_*(\mathcal{D}(X))$. In particular, $J(f,x_0)=\mathcal{Z}_Hf_*(H)$ for any self-map $f\colon \Sigma(2n+1)/H\to \Sigma(2n+1)/H$.
\end{corollary}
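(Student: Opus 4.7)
The plan is to assemble the corollary from three ingredients already at hand: the inclusions of Proposition~\ref{pp}, the discussion immediately preceding the corollary, and the cited results from \cite{gol}. First, I would note that by Proposition~\ref{pp} we always have the chain
\[
G^f(\Sigma(2n+1)/H,f(x_0))\subseteq \mathcal{H}^{\tilde{f}_0}(\Sigma(2n+1)/H)\subseteq P^f(\Sigma(2n+1)/H,f(x_0)),
\]
and that the paragraph preceding the corollary has already established the identification
\[
\mathcal{H}^{\tilde{f}_0}(\Sigma(2n+1)/H)=P^f(\Sigma(2n+1)/H,f(x_0))=\mathcal{Z}_Hf_*(\mathcal{D}(X)),
\]
using that $H=\mathcal{D}(\Sigma(2n+1)/H)$ acts trivially on all higher homotopy groups of $\Sigma(2n+1)/H$ (so every deck transformation is homotopic to the identity).

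Next, I would invoke \cite[Theorem~1.17]{gol}, which applies precisely to maps $f$ satisfying the hypotheses of \cite[Theorem~1.14]{gol}, to obtain the reverse inclusion $P^f(\Sigma(2n+1)/H,f(x_0))\subseteq G^f(\Sigma(2n+1)/H,f(x_0))$, and hence the equality $G^f=P^f$ in this setting. Combined with the chain above, this forces
\[
G^f(\Sigma(2n+1)/H,f(x_0))=\mathcal{H}^{\tilde{f}_0}(\Sigma(2n+1)/H)=\mathcal{Z}_Hf_*(\mathcal{D}(X)).
\]
Finally, Theorem~\ref{qq} identifies $G^f(\Sigma(2n+1)/H,f(x_0))$ with $\mathcal{D}^{\tilde{f}_0}(\Sigma(2n+1)/H)$ under the canonical isomorphism $\pi_1(\Sigma(2n+1)/H,f(x_0))\xrightarrow{\approx}\mathcal{D}(\Sigma(2n+1)/H)$, which gives the first string of equalities in the corollary.

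For the second assertion, I would specialize to the self-map case $X=\Sigma(2n+1)/H$ and $f\colon \Sigma(2n+1)/H\to \Sigma(2n+1)/H$. Then $\mathcal{D}(X)=H$ and, as noted right after Theorem~\ref{qq}, the group $G^f(X,f(x_0))$ coincides with the Jiang group $J(f,x_0)$ of \cite[Chapter~II, 3.5~Definition]{jiang}. Substituting $\mathcal{D}(X)=H$ into the identity already proved yields $J(f,x_0)=\mathcal{Z}_Hf_*(H)$. The main obstacle is the invocation of \cite[Theorem~1.17]{gol}: everything else is just book-keeping, but the equality $G^f=P^f$ for maps into a space form genuinely relies on that external computation (together with the triviality of the $H$-action on higher homotopy groups used to collapse $P^f$ onto the centralizer), and it is there that the hypotheses on $f$ coming from \cite[Theorem~1.14]{gol} are essential.
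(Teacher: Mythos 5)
Your proposal is correct and follows exactly the route the paper intends: the paper derives this corollary from \cite[Theorem~1.17]{gol} (giving $G^f=P^f$ for such maps), Theorem~\ref{qq}, Proposition~\ref{pp}, and the preceding paragraph's identification $\mathcal{H}^{\tilde{f}_0}(\Sigma(2n+1)/H)=P^f(\Sigma(2n+1)/H,f(x_0))=\mathcal{Z}_{H}f_*(\mathcal{D}(X))$, which is precisely how you assemble it. Your specialization to the self-map case via $\mathcal{D}(X)=H$ and $G^f(X,f(x_0))=J(f,x_0)$ also matches the paper's intent.
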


Given a free action  of a finite group $H$ on $\mathbb{S}^{2n+1}$, Oprea \cite[\textsc{Theorem~A}]{oprea} has shown that $G(\mathbb{S}^{2n+1}/H,y_0)=\mathcal{Z}H$, the center of $H$.
In the special case of a free linear action of $H$ on $\mathbb{S}^{2n+1}$, the description of $G(\mathbb{S}^{2n+1}/H,y_0)$ via deck transformations presented in \cite[Theorem~II.1]{gottlieb1} has been applied  in \cite{bro} to get a very nice representation-theoretic proof of \cite[\textsc{Theorem~A}]{oprea}. The result stated in Theorem~\ref{qq} might be applied to extend the methods from \cite{bro} to simplify the  proof of  \cite[Theorem~1.17]{gol} on the case of a free linear action  of $H$ on $\mathbb{S}^{2n+1}$ as well.

\end{document}